\newcommand{\RR}{\mathbb{R}}
\newcommand{\ZZ}{\mathbb{Z}}
\newcommand{\affS}{\widetilde{S}}
\newcommand{\wt}{\widetilde}
\newcommand{\ol}{\overline}
\newcommand{\tvd}{\operatorname{dis}}
\renewcommand{\neg}{\operatorname{neg}}
\newcommand{\depth}{\operatorname{dp}}
\newcommand{\reflen}{\operatorname{reflen}}
\newcommand{\bl}{\operatorname{bl}}
\newtheorem{conjecture}{Conjecture}[section]
\newtheorem{theorem}[conjecture]{Theorem}
\newtheorem{corollary}[conjecture]{Corollary}
\newtheorem{lemma}[conjecture]{Lemma}
\newtheorem{proposition}[conjecture]{Proposition}
\theoremstyle{definition}
\newtheorem{question}[conjecture]{Question}
\newtheorem{example}[conjecture]{Example}
\newtheorem{remark}[conjecture]{Remark}
\newtheorem{definition}[conjecture]{Definition}
\newtheorem*{question*}{Question}
\newcommand{\bm}[1]{\mbox{\boldmath $#1$}}
\title{Bargain hunting in a Coxeter group}
\author[Joel Brewster Lewis]{Joel Brewster Lewis$^*$}
\address{Department of Mathematics, George Washington University, Washington, DC, USA}
\email{jblewis@gwu.edu}
\thanks{$^*$Research partially supported by a grant from the Simons Foundation (634530).}
\author[Bridget Eileen Tenner]{Bridget Eileen Tenner$^\dagger$}
\address{Department of Mathematical Sciences, DePaul University, Chicago, IL, USA}
\email{bridget@math.depaul.edu}
\thanks{$^\dagger$Research partially supported by NSF Grant DMS-2054436.}
\begin{document}

\maketitle

\begin{abstract}
Petersen and Tenner defined the depth statistic for Coxeter group elements which, in the symmetric group, can be described in terms of a cost function on transpositions.  We generalize that cost function to the other classical (finite and affine) Weyl groups, letting the cost of an individual reflection $t$ be the distance between the integers transposed by $t$ in the combinatorial representation of the group (\`a la Eriksson and Eriksson).  Arbitrary group elements then have a well-defined cost, obtained by minimizing the sum of the transposition costs among all factorizations of the element.  We show that the cost of arbitrary elements can be computed directly from the elements themselves using a simple, intrinsic formula. 
\end{abstract}

\section{Introduction}

Given any group $W$, any generating set $T$ for $W$, and any \emph{cost function}
\[
    \$ : T \rightarrow \RR_{>0}
\]
on the generators, one can extend the cost function to all of $W$ by minimizing over all decompositions of $w$ into products of elements of $T$:
\begin{equation*}
    \$(w) = \min_{\substack{t_1,\ldots, t_k \in T: \\ t_1\cdots t_k = w}}  \big\{\$(t_1) + \cdots + \$(t_k)\big\}.
\end{equation*}
One family of common examples occurs in the case that $\$(t) = 1$ for all $t \in T$: then $\$(w)$ is the minimum length of an expression for $w$ as a product of generators.  For example, when $W = S_n$ is the symmetric group and $T = S := \{(1~2), (2~3), \ldots, (n - 1~n)\}$ is the set of simple transpositions, we have that $\$(w)$ is the length of the shortest possible expressions for $w$ as a product of simple transpositions, which is also known to be the \emph{inversion number} $\ell_S(w)$ of $w$.  If, instead, $T = \{ (1~2), (2~3), (1~3), \ldots\}$ consists of \emph{all} transpositions in $S_n$ and $\$(t) = 1$ for all $t \in T$, then $\$(w)$ is the \emph{reflection length} $\reflen(w)$ of $w$, which is also known as the \emph{absolute length}. This is known to equal $n - c(w)$, where $c(w)$ is the number of cycles of the permutation $w$.

In \cite{PT}, the authors considered the situation in which $S_n$ is generated by the set $T = \{(i~j)\}$ of all transpositions, and the cost function on transpositions is given by
\begin{equation}
\label{eq:difference cost}
    \$((i~j)) = |j-i|,
\end{equation}
i.e., the cost of a transposition is equal to the distance between the points it transposes in the one-line notation of a permutation.  They showed that for this function, the cost of a permutation is half of its \emph{total displacement}:
\begin{equation}
\label{eq:PT answer}
    \$(w) = \frac{1}{2} \sum_{i=1}^n |w(i)-i|.
\end{equation}

One theme of all of these examples is that a certain ``extrinsic,'' ``extremal'' quantity (the minimum of a cost function over the family of factorizations of a given element) can also be given by a simple ``intrinsic'' formula that can be computed directly from the element.  The main thesis of this note is that formulas of this kind are beautiful and interesting, and that it would be nice to have more of them.

The symmetric groups are Coxeter group of finite type $A$, and the three preceding examples can all be rephrased at the level of generality of arbitrary Coxeter groups.  In particular, in \cite{PT}, it was shown that the cost $\$((i~j)) = |j - i|$ can be described in terms of the associated root system: the transpositions in $S_n$ are the reflections, and the cost $|j - i|$ is the \emph{depth} of the positive root associated to the reflection $(i~j)$ in the root system.  This gives two perspectives to the work of \cite{PT}: the combinatorial view focuses on the definition of the cost function given in Equation~\eqref{eq:difference cost}, while the algebraic view focuses on the definition of the cost function in terms of root depths.

In \cite{BBNW}, the authors studied the depth-defined cost function for the Coxeter groups of types $B$ and $D$ (the signed and even-signed permutations). They were able to give formulas for the cost of arbitrary elements with this depth-defined cost function, but their results were, in a sense, less tidy than the results cited above. 
This raises the following question.

\begin{question*}
Are there natural choices of cost functions on the reflections in types $B$ and $D$, or on other related groups like the affine symmetric group, whose extension to the group is given by a simple, attractive formula?
\end{question*}

Our main results are to answer this question in the affirmative, generalizing the combinatorial perspective on the cost function of Equation~\eqref{eq:difference cost}.  We work in the level of generality of the infinite families of Weyl and affine Weyl groups---finite types $A$, $B$, $D$ and affine types $\wt{A}$, $\wt{B}$, $\wt{C}$, $\wt{D}$---which Eriksson and Eriksson called \emph{George groups} \cite{EE}.  These groups share a common combinatorial description as permutation groups acting on $\ZZ$ that commute with certain symmetries of the number line.  

The details of these groups and of our cost function \$, which is motivated directly by the cost given in Equation~\eqref{eq:difference cost}, are presented in Section~\ref{sec:background}. The main results of our work are a pair of theorems---Theorem~\ref{thm:unbranched} and Theorem~\ref{thm:finite type}---giving the cost of arbitrary elements in the unbranched and finite George groups, respectively. Those results show that the cost of an arbitrary element can be computed directly from that element using a simple, intrinsic formula. Finally, in Section~\ref{sec:further remarks}, we make some further remarks and state some open problems related to our work.

\section{Background}
\label{sec:background}
\subsection{Who are the groups?}

The main objects of this paper are the \emph{classical (finite and affine) Coxeter groups}, or, in the language of \cite{EE}, the \emph{George groups}.  Each of these groups consists of bijections from (a subset of) $\ZZ$ to itself that satisfy certain symmetry conditions.  We describe them now, following \cite{BB, EE}.

\begin{itemize}
\item The \emph{symmetric group} $S_n$ is a Coxeter group of (finite) type $A$.  It consists of all bijections from $[n] = \{1, \ldots, n\}$ to itself.  

\item The \emph{hyperoctahedral group} $S^B_n$ is a Coxeter group of (finite) type $B$ (and also finite type $C$). Its elements are the \emph{signed permutations}, which are the bijections from $\pm[n] := \{\pm 1, \ldots, \pm n\}$ to itself satisfying the symmetry condition $w(i) = -w(-i)$. 

\item The group $S^D_n$ is a Coxeter group of (finite) type $D$. It is the subgroup of $S^B_n$ consisting of the \emph{even-signed permutations}, those for which $\#\{i \in [n] : w(i) < 0\}$ is even.

\item The \emph{affine symmetric group} $\wt{S}_n$ is a Coxeter group of affine type $\wt{A}$.  Its elements are the \emph{affine permutations}. These are bijections $w: \ZZ \to \ZZ$ such that
\begin{itemize}[$\star$]
    \item $w(i + n) = w(i) + n$ for all $i \in \ZZ$, and
    \item $w(1) + \cdots + w(n) = \binom{n+1}{2}$.
\end{itemize}

\item The Coxeter group $\wt{S}^C_n$ of affine type $\wt{C}$ consists of the \emph{affine signed permutations}. In the language of \cite{EE} (which differs from that in \cite{BB}---see Section~\ref{sec:BB vs EE}), these are the bijections $w: \ZZ \to \ZZ$ such that
\begin{itemize}[$\star$]
    \item $w(-i) = -w(i)$ for all $i \in \ZZ$, and
    \item $w(i + (2n + 2)) = w(i) + (2n + 2)$ for all $i \in \ZZ$.
\end{itemize}

\item The Coxeter groups $\wt{S}^B_n$ and $\wt{S}^D_n$ of affine types $\wt{B}$ and $\wt{D}$ are subgroups of $\wt{S}^C_n$ satisfying some evenness conditions analogous to the condition in finite type $D$.
\end{itemize}

We will refer to these groups collectively as the \emph{George groups of window size $n$}. Due to the symmetry conditions in these groups, each element $w$ is uniquely described by the \emph{window} of data $[w(1), w(2), \ldots, w(n)]$. For example, for $w = [-5, 6, 7] \in \wt{S}^C_3$ one has $w(4) = 4$, $w(5) = w(-3) + 8 = -w(3)+8 = 1$, $w(6) = w(-2) + 8 = -w(2) + 8 = 2$, $w(7) = w(-1) + 8 = -w(1) + 8 = 13$, $w(8) = 8$, and so on.

Our work focuses on these five groups $S_n$, $S^B_n$, $S^D_n$, $\wt{S}_n$, and $\wt{S}^C_n$, which we will think of as the union of two (overlapping) classes:
\begin{itemize}
\item the \emph{unbranched} George groups $S_n$, $S^B_n$, $\wt{S}_n$, and $\wt{S}^C_n$, and
\item the \emph{finite} George groups $S_n$, $S^B_n$, and $S^D_n$.
\end{itemize}
The ``unbranched'' terminology refers to the fact that the Dynkin diagrams for these groups have no vertices of degree greater than $2$ (see \cite[Table 1]{EE} or \cite[\S A1]{BB}). We will say more about the remaining two groups $\wt{S}^B_n$ and $\wt{S}^D_n$ in Section~\ref{sec:affine B and D}. 

It is common to ease notation by writing $\overline{i} := -i$.  The symmetry conditions satisfied by a George group $W$ divide its domain into \emph{symmetry classes}.  For $S_n$, with no symmetry conditions, these are simply the singleton classes $\{1\}$, \ldots, $\{n\}$.  The single symmetry condition satisfied by $S^B_n$ and $S^D_n$ yields $n$ symmetry classes $\{1, \ol{1}\}$, \ldots, $\{n, \ol{n}\}$.  For the affine symmetric group $\wt{S}_n$, the symmetry classes are precisely the congruence classes of integers modulo $n$.  Affine signed permutations satisfy two symmetry conditions: elements of $\wt{S}^C_n$ are the bijections on $\ZZ$ that commute with reflection across $0$ and translation by $2(n + 1)$.  It follows that $w((2n + 2) - i) = (2n + 2) - w(i)$ for $w$ in this group, so the affine signed permutations commute with reflection across $n + 1$.  By translation, affine signed permutations commute with reflection across $k(n + 1)$ for all $k$, and so $k(n + 1)$ is a fixed point of every affine signed permutation for all $k$.  Thus, the symmetry classes for the remaining groups $\wt{S}^C_n$, $\wt{S}^B_n$, and $\wt{S}^D_n$ consist of the nontrivial classes $\{ m \colon m \equiv i \text{ or } \ol{i} \pmod{2n + 2}\}$ for $i = 1, \ldots, n$, as well as the trivial classes $\{k(n + 1)\}$ for $k \in \ZZ$.

\subsection{What are the statistics?}
\label{sec:statistics}

We will study several permutation statistics in this article, and we highlight the key definitions here.  Throughout this paper, $n$ will be an arbitrary but fixed positive integer.

We begin with a definition from \cite{EE}, giving a generic framework of transpositions that works for all of the groups we study, rather than naming the particular transpositions that apply in each case.

\begin{definition}
Given a George group $W$, a pair $\{i,j\}$ of different positions in $\ZZ$ is \emph{transposable} if there exists at least one element $w$ in $W$ for which $w(i) = j$ and $w(j) = i$. If $\{i,j\}$ is a transposable pair, then the \emph{transposition} $\langle (i~j ) \rangle$ is the extension of the transposition $(i~j)$ under the symmetry conditions of the group; this is always an element of $W$ of multiplicative order $2$.
\end{definition}

The transpositions in a George group $W$ are precisely the \emph{reflections} (when $W$ is viewed as a Coxeter group).  In what follows, it will be useful to divide the transpositions into two flavors.  First, all George groups contain transpositions of the form $\langle (i~j)\rangle$ where $i$ and $j$ belong to different symmetry classes.   Second, some groups contain transpositions that switch two integers in the \emph{same} symmetry class: in particular, the groups $S^B_n$ and $\wt{S}^C_n$ contain transpositions of the form $\langle (i~\ol{i})\rangle$, and the groups $\wt{S}^B_n$ and $\wt{S}^C_n$ contain transpositions of the form $\langle (i~2n+2 - i) \rangle$. 

In each of the George groups of window size $n$, there is a natural generating set of \emph{simple reflections} that makes it a Coxeter group. 
For $i \in [n]$, define $s_i := \langle (i~i+1) \rangle$. Additionally, we let $s_0 := \langle (1~\ol{1}) \rangle$, $s_1' := \langle (1~\ol{2}) \rangle$, and $s_n' := \langle (n~n+2) \rangle$.
Then
\begin{itemize}
\item $S^A_n := S_n$ is generated by $\{s_1, \ldots, s_{n-1}\}$,
\item $S^B_n$ is generated by $\{s_0, s_1, \ldots, s_{n-1}\}$,
\item $S^D_n$ is generated by $\{s_1', s_1, \ldots, s_{n-1}\}$,
\item $\wt{S}_n$ is generated by $\{s_1, \ldots, s_{n-1}, s_n\}$, and
\item $\wt{S}^C_n$ is generated by $\{s_0, s_1, \ldots, s_{n-1}, s_n'\}$.
\end{itemize}

In the symmetric group, one often considers \emph{inversions}, which are pairs of elements of $[n]$ that are out of order.  The following definition extends this notion to all George groups.

\begin{definition}
Let $w$ be an element in a George group.  A (right) \emph{class inversion} in $w$ is a transposition $\langle (i~j) \rangle$ such that $i < j$ and $w(i) > w(j)$. 
\end{definition}

The \emph{length} $\ell(w)$ of an element of a Coxeter group is the smallest number of simple reflections needed to multiply to give $w$.  In the symmetric group, this is equal to the number of inversions of $w$.  By \cite[Thm.~15]{EE}, the previous definition extends this to all George groups: for any element $w$ in any George group, the length $\ell(w)$ in that group is equal to the number of its class inversions.

Because an element of a George group of window size $n$ is entirely determined by the data in its window $[w(1), \ldots, w(n)]$, we can define the following statistic on all such elements.

\begin{definition}
\label{def:tvd}
If $w$ belongs to any of the George groups of window size $n$, then the \emph{total displacement} \cite{Knuth} or \emph{Spearman's disarray} \cite{DG} of $w$ is
\[
\tvd(w) := \sum_{i = 1}^n |w(i) - i|.
\]
\end{definition}
\begin{remark}
\label{changing the index set}
When $W$ respects a symmetry group on a subset of $\ZZ$ (made up of some set of reflections and translations), and if $i$ and $j$ are in the same symmetry class, then $|w(i) - i| = |w(j) - j|$.  Consequently, in Definition~\ref{def:tvd}, the summation index set $[n]$ may be replaced by any set that contains exactly one element in the same symmetry class as $i$ for each $i$ in $[n]$.
\end{remark}

As mentioned in the introduction, the statistic studied in this paper is the cost function $\$(w)$, defined as follows:  if $t$ is a transposition in a George group, then 
$$\$(t) = \frac{\tvd(w)}{2},$$
while for an arbitrary element $w$ we define
\begin{equation}
\label{eq:optimization}
    \$(w) = \min_{\substack{t_1\cdots t_k = w \\ t_1,\ldots, t_k \in T}}  \big\{\$(t_1) + \cdots + \$(t_k)\big\}.
\end{equation}
Concretely, the first condition means that $\$(\langle (i~j)\rangle) = |i - j|$ if $i$ and $j$ belong to different symmetry classes, while $\$(\langle (i~j)\rangle) = \frac{1}{2}|i - j|$ if $i$ and $j$ belong to the same symmetry class.  For example, in the hyperoctahedral group $S^B_n$, for $i, j$ distinct elements of $[n]$ we have
\begin{itemize}
    \item $\$( (i~j)(\ol{i}~\ol{j})) = |i - j|$, 
    \item $\$( (i~\ol{i})) = i$, and
    \item $\$( (i~\ol{j})(\ol{i}~j)) = i + j$. 
\end{itemize}

In \cite{PT} and subsequently \cite{BBNW}, the authors consider a statistic on an arbitrary Coxeter group $W$ that they call \emph{depth}.  For a reflection $t$, the depth is defined to be\footnote{This is the same as the depth in the root system of $W$ of the positive root orthogonal to the reflecting hyperplane of $t$.}
\[
\depth(t) = \frac{1 + \ell(t)}{2},
\]
and depth is extended to all of $W$ by the same minimization as in Equation~\eqref{eq:optimization}.  In the case of the symmetric group, one has that $\ell( (i~j)) = 2|j - i| - 1$.  Thus $\depth( (i~j )) = \frac{1}{2} \tvd((i~j))$ for every transposition $(i~j)$ in $S_n$, and consequently $\depth(w) = \$(w)$ for every permutation $w \in S_n$; the main result of \cite{PT} establishes that $\depth(w) = \$(w) = \frac{1}{2} \tvd(w)$ for $w \in S_n$.  In types $B$ and $D$, we no longer have that $\depth(t)$ and $\$(t)$ are equal for all transpositions $t$; in particular, for a transposition $ \langle (i \ \ol{j}) \rangle = (i \ \ol{j})(\ol{i} \ j)$ with $i, j > 0$, we have $\$(\langle (i \ \ol{j}) \rangle) = i + j$ while $\depth^B(\langle (i \ \ol{j}) \rangle) = i + j - 1$ and $\depth^D(\langle (i \ \ol{j}) \rangle) = i + j - 2$. Thus, the problems of computing, for arbitrary $w$ in $S^B_n$ or $S^D_n$, the values of $\depth(w)$ and $\$(w)$ are different.  The main result of \cite{BBNW} is to give formulas for $\depth(w)$ for $w$ in $S^B_n$ or $S^D_n$, in terms of another statistic they call the \emph{blocks} of the signed permutation, which we will discuss in Section~\ref{sec:finite}.

\section{Main theorems}
\label{sec:main theorems}
The main results of this paper are about the cost of elements in the George groups. We will state and prove this cost first for the unbranched George groups, because arguments in those cases are susceptible to the same proof techniques. We will then state and prove analogous results for the finite George groups, again taking advantage of commonalities in the approaches for those cases. These two classes of groups overlap in $S_n$ and $S^B_n$, and, of course, the results are the same for those groups whether they are considered unbranched or finite.

\subsection{First main theorem: unbranched George groups}
\label{sec:unbranched}

In this section, we prove our first main theorem, which applies to the unbranched George groups $S_n$, $S^B_n$, $\affS_n$, and $\affS^C_n$.

\begin{theorem}
\label{thm:unbranched}
    For any unbranched George group $W$ and any element $w$ in $W$, we have
    \[
    \$(w) = \frac{\tvd(w)}{2}.
    \]
\end{theorem}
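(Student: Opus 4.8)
The plan is to prove the two inequalities $\$(w) \ge \tfrac12\tvd(w)$ and $\$(w) \le \tfrac12\tvd(w)$ separately; both rest on controlling how right-multiplication by a single reflection changes the total displacement. The central estimate is the Lipschitz-type bound
\[
\bigl|\tvd(wt) - \tvd(w)\bigr| \le 2\,\$(t) \qquad\text{for every } w\in W \text{ and every reflection } t.
\]
Granting this, the lower bound is immediate: given a factorization $w = t_1\cdots t_k$, set $u_m = t_1\cdots t_m$, so $u_0 = e$, $u_k = w$, and $u_m = u_{m-1}t_m$. Then $\tvd(w) = \sum_{m}\bigl(\tvd(u_m)-\tvd(u_{m-1})\bigr) \le 2\sum_m \$(t_m)$, and minimizing over factorizations gives $\$(w) \ge \tfrac12\tvd(w)$.

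To prove the Lipschitz bound I would case on the flavor of $t$. If $t = \langle(a~b)\rangle$ with $a,b$ in different symmetry classes, then right multiplication alters only the displacement contributions of the two classes; taking $a,b$ themselves as representatives with $t(a)=b$, one computes $\tvd(wt)-\tvd(w) = \bigl(|w(b)-a|+|w(a)-b|\bigr)-\bigl(|w(a)-a|+|w(b)-b|\bigr)$ (with $w(a),w(b)$ replaced by their symmetric images in the $\langle(i~\ol{j})\rangle$ case), and two applications of the reverse triangle inequality $\bigl||X|-|Y|\bigr|\le|X-Y|$ bound this by $2\,\$(t)$, the relevant argument differences being $|a-b|$ and $i+j$ respectively. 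If instead $t=\langle(a~b)\rangle$ is a same-class reflection across a point $c$ (namely $c=0$ for $\langle(i~\ol{i})\rangle$ and $c=n+1$ for $\langle(i~2n+2-i)\rangle$), then only one class is affected and the defining symmetry gives $(wt)(a) = 2c - w(a)$, so $\tvd(wt)-\tvd(w) = |2c-w(a)-a|-|w(a)-a|$, again bounded by $2|c-a| = 2\,\$(t)$. This exhausts the reflections in all four unbranched groups.

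For the upper bound I would induct on $\tvd(w)$: it suffices to produce, for each $w \ne e$, a reflection $t$ with $\$(t)>0$ meeting the equality $\tvd(wt)=\tvd(w)-2\,\$(t)$, since then $\$(w)\le\$(wt)+\$(t)\le\tfrac12\tvd(wt)+\$(t)=\tfrac12\tvd(w)$. Tracking equality above shows that in $S_n$ and $\affS_n$ such a $t$ is exactly a \emph{crossing}: a pair $a<b$ with $w(a)\ge b$ and $w(b)\le a$. I would prove one exists whenever $w\ne e$ by an ``excedance climb''. Starting from any $a_0$ with $w(a_0)>a_0$ (one exists since the displacements sum to zero over a window), if no $j\in(a_0,w(a_0)]$ has $w(j)\le a_0$, then a pigeonhole count on the $w(a_0)-a_0+1$ positions in $\{a_0,\ldots,w(a_0)\}$, all of which now map above $a_0$, yields $a_1\in(a_0,w(a_0)]$ with $w(a_1)>w(a_0)$; repeat. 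For $S_n$ the images $w(a_0)<w(a_1)<\cdots$ are bounded, so the process terminates at a crossing. For $\affS_n$ the intervals $(a_t,w(a_t)]$ overlap (as $a_{t+1}\le w(a_t)$) and exhaust $(a_0,\infty)$, forcing $w(j)>a_0$ for all $j>a_0$; but flow conservation across the cut at $a_0$, $\#\{i\le a_0:w(i)>a_0\}=\#\{i>a_0:w(i)\le a_0\}$, is then violated, since $a_0$ itself contributes to the left side while the right side is $0$. One also checks that a crossing cannot have $a\equiv b\pmod n$, so it is always a legitimate reflection.

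The hard part will be the upper bound in the signed groups $S^B_n$ and $\affS^C_n$. There a deficiency can occur at every window position (e.g. $w=[\ol 1,\ol 2]$ in $S^B_2$), so the type-$A$ climb on $[n]$ need not start, and reducing displacement may genuinely require a sign-changing reflection $\langle(i~\ol{i})\rangle$ (or a reflection across some $k(n+1)$) rather than a type-$A$-style crossing. I expect to handle this by running the analogous climb on the symmetric domain $\pm[n]$ (respectively on $\ZZ$ with both symmetries), where excedances always exist; the bookkeeping must certify both that a displacement-reducing reflection of \emph{some} permitted type always exists and that it realizes the equality $\tvd(wt)=\tvd(w)-2\,\$(t)$. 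Making this one reduction step uniform across the finite, affine, signed, and signed-affine features simultaneously is the technical heart of the argument.
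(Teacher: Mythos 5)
Your overall architecture matches the paper's: the lower bound $\$(w)\ge\tfrac12\tvd(w)$ via a triangle-inequality estimate (the paper packages this as subadditivity, $\tvd(uw)\le\tvd(u)+\tvd(w)$, which telescopes the same way your Lipschitz bound does), and the upper bound by induction, peeling off at each step a reflection $t$ with $\tvd(wt)=\tvd(w)-2\,\$(t)$. Your identification of the needed configuration --- a transposable pair $a<b$ with $w(a)\ge b$ and $w(b)\le a$ --- is exactly the paper's key lemma, and your excedance-climb arguments for $S_n$ and $\affS_n$ are correct (if more elaborate than necessary: in $S_n$ one can simply take $y$ with $w(y)$ the smallest non-fixed value and observe some $x<y$ must have $w(x)\ge y$, with minimality forcing $x\ge w(y)$).

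The genuine gap is that you prove the existence of such a pair only for $S_n$ and $\affS_n$, and explicitly defer the signed groups $S^B_n$ and $\affS^C_n$ --- half of the theorem --- to an ``expected'' argument whose ``bookkeeping'' you acknowledge you have not done. This is not a routine verification to be waved at: it is where the same-symmetry-class reflections $\langle(i~\ol{i})\rangle$ and $\langle(i~2n+2-i)\rangle$ enter, and where the choice of starting point for any climb is genuinely unclear (your own example $[\ol1,\ol2]$). The paper closes this as follows. For $S^B_n$, run the ``smallest non-fixed value'' argument on the doubled window $[w(-n),\ldots,w(-1),w(1),\ldots,w(n)]$; the resulting pair may lie in one symmetry class (e.g.\ $\{x,y\}=\{\ol2,2\}$ for $[\ol1,\ol2]$), and that is fine. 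For $\affS^C_n$, the two symmetries force the exceedance set $E$ and anti-exceedance set $A$ to each be unbounded in both directions whenever $w\ne e$, so one can pick $y\in A$ whose nearest non-fixed neighbor to the left is some $x\in E$; since everything strictly between is fixed and $w$ is a bijection, $w(x)\ge y>x\ge w(y)$, and $\{x,y\}$ is transposable because $E$ and $A$ are disjoint. Finally, the exact identity $\tvd(wt)=\tvd(w)-\tvd(t)$ must be verified separately in the same-class case (choosing an index set with one representative per class, the pair $\{x,y\}$ contributes only one summand, and the drop is $y-x=\tvd(t)$ rather than $2(y-x)$); your proposal does not address this case split in the equality step. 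Until these pieces are supplied, the upper bound is unproved for two of the four groups.
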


In the case of $S_n$, Theorem~\ref{thm:unbranched} recovers \cite[Theorem~1.1]{PT}. Our proof here will be more akin to the proof in \cite{BBNW} than the one in~\cite{PT}. In fact, the proof of Theorem~\ref{thm:unbranched} will follow the same general outline for each of the four unbranched George groups: first we will show that the cost $\$(w)$ is at least as large as $\tvd(w)/2$; then we will show that a particular transposition can always be found in the unbranched George groups; finally, we will show that when that particular kind of transposition exists, equality can be achieved. The result will follow by induction.

\begin{proposition}
    \label{prop:subadditive}
If $u$ and $w$ belong to any George group $W$, then 
\[
\tvd(uw) \leq \tvd(u) + \tvd(w).
\]    
\end{proposition}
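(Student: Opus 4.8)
The plan is to establish the inequality by a single application of the triangle inequality followed by a reindexing step that exploits the symmetry structure of George groups. Writing the group operation as composition, so that $(uw)(i) = u(w(i))$, I would begin from the definition $\tvd(uw) = \sum_{i \in R} |u(w(i)) - i|$, where $R = \{1, \ldots, n\}$ is a transversal containing exactly one representative of each symmetry class that supports $\tvd$ (the trivial fixed-point classes contribute $0$ and may be ignored). For each summand, insert $w(i)$ and apply the triangle inequality,
\[
|u(w(i)) - i| \le |u(w(i)) - w(i)| + |w(i) - i|,
\]
and sum over $i \in R$. The second group of terms is immediately recognized as $\tvd(w)$, so the inequality reduces to the claim that $\sum_{i \in R} |u(w(i)) - w(i)| = \tvd(u)$.

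The substance of the argument is this reindexing identity, and it is where the defining structure of George groups enters. Two facts are needed. First, by Remark~\ref{changing the index set}, the quantity $|u(j) - j|$ depends only on the symmetry class of $j$, so it descends to a well-defined function on symmetry classes. Second, because every element of a George group commutes with the defining symmetries, $w$ permutes the symmetry classes: it fixes each trivial class $\{k(n+1)\}$ and hence permutes the nontrivial classes (those meeting $[n]$) among themselves. Consequently the set $R' := \{w(i) : i \in R\}$ again contains exactly one representative of each nontrivial class, since if $w(i)$ and $w(i')$ belonged to a common class then so would $i$ and $i'$, forcing $i = i'$. Thus $R'$ is a legitimate replacement set for $R$ in the sense of Remark~\ref{changing the index set}, and applying that remark with $R'$ gives
\[
\sum_{i \in R} |u(w(i)) - w(i)| = \sum_{j \in R'} |u(j) - j| = \tvd(u),
\]
which completes the bound.

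The main (indeed the only) obstacle is the verification that $R'$ is a genuine transversal, i.e.\ that reindexing by $w$ is compatible with the class structure; everything else is the triangle inequality. This rests precisely on the interplay of the two facts above — that the summand is a class function (so only the classes of the indices matter, not the particular integers $w$ produces) and that $w$ acts as a bijection on classes (so $w$ carries representatives to representatives). I would present the argument uniformly for all George groups rather than case by case, since the only group-specific ingredient is the description of the symmetry classes already recorded in Section~\ref{sec:background}, and the reindexing step is insensitive to which group we work in.
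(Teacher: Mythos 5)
Your proposal is correct and follows essentially the same route as the paper's proof: a single application of the triangle inequality after inserting $w(i)$, followed by the observation that $\{w(1),\ldots,w(n)\}$ is again a transversal of the symmetry classes so that Remark~\ref{changing the index set} identifies the first sum with $\tvd(u)$. You merely spell out in more detail the verification that $w$ carries a transversal to a transversal, which the paper asserts directly.
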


\begin{proof}
Fix $u$ and $w$ in a George group $W$ of window size $n$. Then by definition and the triangle inequality, we have
\begin{align*}
\tvd(uw) & = \sum_{i = 1}^n |u(w(i)) - i| \\
 & = \sum_{i = 1}^n |u(w(i)) - w(i) + w(i) - i| \\
 & \leq \sum_{i = 1}^n |u(w(i)) - w(i)| + \sum_{i = 1}^n|w(i) - i|.
\end{align*}
Since $W$ is a George group, the set $\{w(1), \ldots, w(n)\}$ contains exactly one element in the symmetry class of $i$ for each $i$ in $[n]$.  Thus, by Remark~\ref{changing the index set}, $ \sum_{i = 1}^n |u(w(i)) - w(i)| = \tvd(u)$, and so $\tvd(uw) \leq \tvd(u) + \tvd(w)$, as claimed.
\end{proof}

We can now prove one direction of Theorem~\ref{thm:unbranched}.

\begin{corollary}
\label{cor:lower bound}
If $w$ belongs to any George group, then
\[
\$(w) \geq \frac{\tvd(w)}{2}. 
\]
\end{corollary}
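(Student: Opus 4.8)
The goal is to show $\$(w) \geq \tvd(w)/2$ for any element $w$ of any George group. By the definition in Equation~\eqref{eq:optimization}, the cost $\$(w)$ is the minimum, over all factorizations $w = t_1 \cdots t_k$ into transpositions, of $\$(t_1) + \cdots + \$(t_k)$. So it suffices to show that \emph{every} such factorization satisfies $\$(t_1) + \cdots + \$(t_k) \geq \tvd(w)/2$; taking the minimum then gives the claim.

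\begin{proof}
Let $w = t_1 \cdots t_k$ be any factorization of $w$ into transpositions. By the definition of the cost function, each transposition $t$ satisfies $\$(t) = \tvd(t)/2$, regardless of whether it swaps integers in the same or different symmetry classes. Applying Proposition~\ref{prop:subadditive} repeatedly, we obtain
\[
\tvd(w) = \tvd(t_1 \cdots t_k) \leq \tvd(t_1) + \cdots + \tvd(t_k) = 2\big(\$(t_1) + \cdots + \$(t_k)\big).
\]
Dividing by $2$ gives $\$(t_1) + \cdots + \$(t_k) \geq \tvd(w)/2$. Since this holds for every factorization, taking the minimum over all factorizations yields $\$(w) \geq \tvd(w)/2$.
\end{proof}

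The plan is thus essentially to unwind the definition and apply the subadditivity of $\tvd$ established in Proposition~\ref{prop:subadditive}, together with the fact that the per-transposition cost $\$(t)$ is exactly half of $\tvd(t)$. The only point requiring a moment's care is to confirm that $\$(t) = \tvd(t)/2$ holds uniformly across the two flavors of transpositions (those within a symmetry class and those between classes); this is immediate from the concrete formulas $\$(\langle (i~j)\rangle) = |i-j|$ and $\$(\langle (i~j)\rangle) = \tfrac{1}{2}|i-j|$ recorded after Equation~\eqref{eq:optimization}. I do not anticipate any genuine obstacle here, since the substantive content has already been isolated into the subadditivity proposition; this corollary is the easy ``lower bound'' half of Theorem~\ref{thm:unbranched}, and the real work will lie in the matching upper bound (exhibiting optimal factorizations), which is not part of this statement.
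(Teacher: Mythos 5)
Your proof is correct and follows essentially the same route as the paper's: both rest on Proposition~\ref{prop:subadditive} together with the fact that $\$(t) = \tvd(t)/2$ for every transposition $t$. The only (immaterial) difference is that you bound the cost of \emph{every} factorization and then take the minimum, whereas the paper applies the same estimate directly to a minimum-cost factorization.
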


\begin{proof}
Given $w$, choose a minimum-cost transposition factorization $w = t_1 \cdots t_k$ of $w$; that is, $\$(w) = \$(t_1) + \cdots + \$(t_k)$.  By definition, $\$(t) = \tvd(t)/2$ for every transposition $t$.  Thus, by Proposition~\ref{prop:subadditive}, we have 
\[
\$(w) = \frac{\tvd(t_1) + \cdots + \tvd(t_k)}{2} \geq \frac{\tvd(t_1 \cdots t_k)}{2} = \frac{\tvd(w)}{2},
\]
as claimed.
\end{proof}

While Proposition~\ref{prop:subadditive} and Corollary~\ref{cor:lower bound} apply to elements of all George groups, the next result requires that the group under consideration be unbranched. Moreover, this is the step in our proof of Theorem~\ref{thm:unbranched} that involves a case analysis, by group. It is notable that the affine cases are somewhat delicate, and use explicit descriptions of which pairs are transposable in their groups. 

\begin{lemma}\label{lem:unbranched can find a good transposable pair}
Let $W$ be an unbranched George group ($S_n$, $S^B_n$, $\wt{S}_n$, or $\wt{S}^C_n$).
If $w$ is a non-identity element of $W$, then there exists a transposable pair $\{x,y\}$ for $W$ such that
$$
w(x) \ge y > x \ge w(y).
$$
\end{lemma}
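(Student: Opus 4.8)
The goal is to find, for any non-identity $w$ in an unbranched George group, a transposable pair $\{x,y\}$ realizing $w(x) \ge y > x \ge w(y)$. The plan is to show such a pair can always be extracted from a descent-like structure that the non-identity hypothesis forces to exist, with a uniform combinatorial argument that is then checked against the transposability constraints of each of the four groups.

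First I would observe that the inequality $w(x) \ge y > x \ge w(y)$ says the pair $\{x,y\}$ is a class inversion of $w$ (since $x < y$ and $w(x) > w(y)$) with the additional ``straddling'' property that the interval $[x,y]$ is trapped between $w(y)$ and $w(x)$: the images overshoot the positions outward on both sides. Because $w$ is not the identity, it has at least one class inversion (equivalently $\ell(w) > 0$ by the Eriksson--Eriksson length formula cited above), so the set of class-inverting transposable pairs is nonempty. The natural strategy is to choose a class inversion $\{x,y\}$ that is \emph{extremal} in some sense so as to force the straddling inequalities. A clean choice is to pick the pair minimizing $y - x$ (a ``short'' or ``covering'' inversion): I would argue that for a minimal-gap class inversion, there can be no $z$ with $x < z < y$ breaking the required chain, and then show $w(x) \ge y$ and $w(y) \le x$ follow because any failure would produce a strictly shorter class inversion, contradicting minimality.

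The key structural input is that in an unbranched George group the transposable pairs are flexible enough that this extremal pair is itself transposable. This is where the second part of the lemma's emphasis --- that one must use explicit descriptions of transposable pairs in the affine cases --- comes into play. For $S_n$ every pair is transposable, so the argument is immediate; for $S^B_n$ one must confirm the candidate pair does not straddle the $0/\bar 0$ symmetry in a forbidden way; and for $\wt S_n$ and $\wt S^C_n$ the transposability conditions (governed by congruences modulo $n$ and modulo $2n+2$, together with fixed points at multiples of $n+1$ in the $\wt C$ case) constrain which $\{x,y\}$ are allowed. I would handle this by exploiting the periodicity: a class inversion in the window can be translated by the period into an honest transposable pair, using Remark~\ref{changing the index set} to move representatives within symmetry classes without changing the relevant inequalities.

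The main obstacle I anticipate is precisely the affine case analysis: ensuring that the extremal inversion I select is genuinely transposable and that the straddling inequalities survive the translation into a transposable representative. In $\wt S^C_n$, the interaction of the two symmetries (reflection across $0$ and translation by $2n+2$) means a naive minimal-gap inversion might pair positions in incompatible symmetry classes, so I expect to need a careful choice --- perhaps minimizing $y - x$ only among transposable pairs from the outset, then proving non-identity still guarantees such a pair exists and straddles correctly. The finite and $\wt A$ cases should fall out quickly once the $\wt C$ argument is in place, since they are less constrained; the real work is verifying the covering-inversion argument is compatible with the $\wt C$ transposability rules.
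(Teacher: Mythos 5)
Your extremal principle is the wrong one, and it already fails in $S_n$. Take $w = [1,4,3,2] \in S_4$. Every non-identity permutation has a descent, so the class inversions of minimal gap $y-x$ always have $y = x+1$; here they are $\{2,3\}$ and $\{3,4\}$. Neither satisfies the straddling inequalities: for $\{2,3\}$ we have $w(3) = 3 > 2 = x$, and for $\{3,4\}$ we have $w(3) = 3 < 4 = y$. The only pair that works is $\{2,4\}$, which has gap $2$. So your claim that ``any failure would produce a strictly shorter class inversion'' is false: the minimal-gap inversion can violate the required inequalities while no shorter class inversion exists. (Your intermediate observation that no $z$ lies strictly between $x$ and $y$ is vacuously true for an adjacent descent, but an adjacent descent need not satisfy $w(x) \ge x+1$ and $w(x+1) \le x$, as the example shows.) Since the argument collapses in type $A$, the transposability discussion for the affine types does not get off the ground.

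The correct extremal choice is different. In the finite cases the paper takes $y$ to be the position of the \emph{smallest non-fixed value} (in the doubled window for $S^B_n$), so that $w(y) < y$ automatically and a pigeonhole count forces some $x$ with $w(y) \le x < y \le w(x)$. In the affine cases ($\wt{S}_n$ and $\wt{S}^C_n$) one instead considers the exceedance set $E$ and anti-exceedance set $A$ and chooses an anti-exceedance position $y$ whose nearest non-fixed position to its left, $x$, is an exceedance; since every position strictly between $x$ and $y$ is fixed, bijectivity forces $w(x) \ge y$ and $w(y) \le x$, and the disjointness of $E$ and $A$ guarantees that $x$ and $y$ lie in different symmetry classes, hence form a transposable pair. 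Your instinct that transposability is the delicate point in type $\wt{C}$ is reasonable, but the real work in the lemma is the selection of the pair itself, and Remark~\ref{changing the index set} is not what rescues a minimal-gap selection.
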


\begin{proof}
We give separate proofs for the different groups. All cases rely on the fact that $w$ is not the identity, and so it has some non-fixed values. 

\vspace{.1in}

\noindent \framebox{Case 1: $W = S_n$} 
Choose $y$ so that $w(y)$ is the smallest non-fixed value. The minimality means that, in particular, $w(y) < y$. Thus there is a value (namely, $w(y)$) less than $y$ and in a position weakly to the right of $y$ in the window of $w$ (in fact, in position $y$). Therefore there must be a value weakly larger than $y$ and appearing strictly to the left of position $y$. That is, there exists $x < y$ with $w(x) \ge y$. Moreover, the definition of $y$ means that $x \in [w(y), y-1]$, so, in fact, we have
$$w(x) \ge y > x \ge w(y).$$

\vspace{.1in}

\noindent \framebox{Case 2: $W = S^B_n$} 
The same proof works, with two adjustments: all references to positions in the window should refer to the ``doubled window'' $[w(-n), \ldots, w(-1), w(1), \ldots, w(n)]$, and $w(y)$ should now be chosen as the smallest non-fixed value in the doubled window.

\vspace{.1in}

\noindent \framebox{Case 3: $W = \wt{S}_n$} 
Divide the non-fixed values of $w$ into two sets: the exceedances $E := \{i \in \ZZ : w(i) > i\}$ and the anti-exceedances $A := \{j \in \ZZ : w(j) < j\}$.  Because $w$ is not the identity, $A \cup E$ is nonempty. Suppose, without loss of generality, that $A$ is nonempty. (The argument in the case that $E$ is nonempty is entirely analogous.)  By the periodicity property of $w$, we have $i \in A$ if and only if $i + kn \in A$, so $A$ has at least one element in $[n]$.  But then because $\sum_{i=1}^n w(i) = \sum_{i=1}^n i$, we must have that $E$ is nonempty as well.  Combining this with the periodicity, we have that for each element $j$ in $A$, there is some element $i$ in $E$ with $i < j$.  Consequently, there exists a position $y \in A$ for which the largest element of $A \cup E$ less than $y$ is an element of $E$; call the position of this exceedance $x$.

By the choice of $x$ and $y$, we have $y > x$, $y > w(y)$, and $w(x) > x$.  Moreover, again by the choice of $x$ and $y$ we have that $w(z) = z$ for all $z$ in $\{x + 1, \ldots, y - 1\}$.  Since $w$ is a bijection, $w(x)$ cannot be equal to any of $w(x + 1) = x+ 1, \ldots, w(y - 1) = y - 1$, and therefore $w(x) \geq y$; and similarly $w(y) \leq x$.  Thus 
\[
w(x) \geq y > x \geq w(y),
\]
as claimed.  
Finally, since $y \in A$, every number of the form $y + kn$ also belongs to $A$.  Since $x$ belongs to $E$ (which is disjoint from $A$), $x$ does not differ from $y$ by a multiple of $n$, and therefore $\{x, y\}$ is a transposable pair.

\vspace{.1in}

\noindent \framebox{Case 4: $W = \wt{S}^C_n$} 
Define the sets $E$ and $A$ as in Case 3. Again, because $w$ is not the identity, we have $A \cup E \neq \emptyset$.
Suppose that $i \in E$, so that $w(i) > i$.  Then by the first symmetry property of $w$, we have that $w(-i) = -w(i) < -i$, so $-i \in A$.  Moreover, since $w(i + k(2n + 2)) = w(i) + k(2n + 2)$ for all $k$, we have $i + k(2n + 2) \in E$ and $-i + k(2n + 2) \in A$ for all $k$.  Similarly, if $j \in A$ then $j + k(2n + 2) \in A$ and $-j + k(2n + 2) \in E$ for all $k$.  
It follows that when $E \cup A$ is nonempty, both $E$ and $A$ include arbitrarily large and arbitrarily small elements.  Therefore, for each element $j$ in $A$, there is some element $i$ in $E$ with $i < j$.  Consequently, there exists a position $y \in A$ for which the largest element of $A \cup E$ less than $y$ is an element of $E$; call this position $x$.  

By the choice of $x$ and $y$, we have $y > x$, $y > w(y)$, and $w(x) > x$.  Moreover, again by the choice of $x$ and $y,$ we have that $w(z) = z$ for all $z$ in $\{x + 1, \ldots, y - 1\}$.  Since $w$ is a bijection, $w(x)$ cannot be equal to any of $w(x + 1) = x+ 1, \ldots, w(y - 1) = y - 1$, and therefore $w(x) \geq y$; and similarly $w(y) \leq x$.  Thus 
\[
w(x) \geq y > x \geq w(y),
\]
as claimed.  The sets $E$ and $A$ are disjoint, with $x \in E$ and $y \in A$. Thus, $x$ does not differ from $y$ by a multiple of $2n + 2$, and therefore $\{x, y\}$ is a transposable pair.
\end{proof}

The last step of our argument is to show that in the presence of a transposable pair such as that described in the statement of Lemma~\ref{lem:unbranched can find a good transposable pair}, we can peel off a transposition from $w$ in an advantageous manner.

\begin{lemma}\label{lem:good transposition can be peeled off}
Suppose that $w$ is an element of a George group $W$ and that $\{x, y\}$ is a transposable pair for $W$ such that $w(x) \geq y > x \geq w(y)$.  Then
\[
\tvd(w \cdot \langle (x \ y)\rangle) = \tvd(w) - \tvd(\langle (x \ y)\rangle).
\]
\end{lemma}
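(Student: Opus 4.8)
The plan is to compute $\tvd(w\cdot t)$ directly, where $t=\langle (x~y)\rangle$, by comparing it termwise with $\tvd(w)$ and exploiting the freedom granted by Remark~\ref{changing the index set} to evaluate both total displacements over one conveniently chosen set of symmetry-class representatives. The starting observation is that right multiplication by $t$ only permutes positions lying in the symmetry orbit of $\{x,y\}$, namely the union of the symmetry class of $x$ and the symmetry class of $y$; every position outside this orbit is fixed by $t$, so $w$ and $wt$ agree there. Since $(wt)(i)=w(t(i))$ and $t$ interchanges $x$ and $y$, the only representatives at which the two functions can differ are those chosen from the class(es) of $x$ and $y$. Throughout, the hypothesis $w(x)\ge y>x\ge w(y)$ is exactly what is needed to strip the absolute values.

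I would then split into two cases, according to whether $x$ and $y$ lie in different symmetry classes or in the same one. In the different-class case (which includes every transposition of $S_n$ and $\wt{S}_n$), I choose a representative set $R$ containing both $x$ and $y$; then all terms except those indexed by $x$ and $y$ cancel in $\tvd(w)-\tvd(wt)$, leaving
\[
\big(|w(x)-x|-|w(y)-x|\big)+\big(|w(y)-y|-|w(x)-y|\big).
\]
Using $w(x)\ge y$, $w(y)\le x$, and $x<y$ to resolve each absolute value, this simplifies to $2(y-x)$, which is precisely $\tvd(t)=2|x-y|$ for a different-class transposition.

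In the same-class case (the transpositions $\langle (i~\ol{i})\rangle$ and $\langle (i~2n+2-i)\rangle$), the set $R$ may contain only one representative of the common class, so I take that representative to be $y$. Here the essential extra input is that a transposable pair within a single symmetry class is interchanged by a reflection symmetry of $W$ about some center $c$, so that $x+y=2c$; since every element of $W$ commutes with this reflection, $w(x)+w(y)=2c=x+y$. With this identity, the lone surviving term gives
\[
\tvd(w)-\tvd(wt)=|w(y)-y|-|w(x)-y|=y-x,
\]
again after resolving the absolute values with the hypothesis, and $y-x=|x-y|=\tvd(t)$ for a same-class transposition.

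The step I expect to be the main obstacle is the bookkeeping in the same-class case: one must verify that only a single class-representative term survives (rather than the two that survive in the different-class case), establish the reflection identity $w(x)+w(y)=x+y$, and confirm that the resulting factor-of-two discrepancy between the two cases matches exactly the factor-of-two difference between the costs $|x-y|$ and $\tfrac12|x-y|$ built into the definition of $\$$ on different-class versus same-class transpositions. The different-class computation, by contrast, is a direct generalization of the symmetric-group calculation underlying \cite{PT}.
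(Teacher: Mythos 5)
Your proposal is correct and follows essentially the same route as the paper's proof: the same case split on whether $x$ and $y$ lie in the same symmetry class, the same choice of representative sets via Remark~\ref{changing the index set}, and the same resolution of absolute values from the hypothesis $w(x)\ge y>x\ge w(y)$. Your reflection identity $w(x)+w(y)=x+y$ is just a restatement of the paper's observation that $|w(x)-x|=|w(y)-y|$ for same-class pairs, so the two arguments coincide.
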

\begin{proof} 
Let $w, x, y$ be as in the statement. Suppose first that $x$ and $y$ do not belong to the same symmetry class.  In this case,
it is possible to choose a set $I$ of $n$ integers that contains both $x$ and $y$ and that contains exactly one element from the symmetry class of $i$ for each $i \in [n]$ (as in Remark~\ref{changing the index set}).  With these choices, we have
\begin{align*}
\tvd(w) & = \sum_{i \in I}|w(i) - i| \\
& = |w(x) - x| + |w(y) - y| + \sum_{i \in I \smallsetminus\{x, y\}}|w(i) - i| \\
& = w(x) - x + y - w(y) + \sum_{i \in I \smallsetminus\{x, y\}}|w(i) - i|.
\end{align*}
Furthermore, because $\langle(x \ y )\rangle$ is a transposition, $\langle(x \ y )\rangle (i) = i$ for $i \in I \smallsetminus\{x, y\}$, and therefore
\begin{align*}
\tvd(w\cdot \langle(x \ y )\rangle) & = \sum_{i \in I}|w(\langle(x \ y )\rangle(i)) - i| \\
& = |w(y) - x| + |w(x) - y| + \sum_{i \in I \smallsetminus\{x, y\}}|w(i) - i| \\
& = x - w(y) + w(x) - y+ \sum_{i \in I \smallsetminus\{x, y\}}|w(i) - i| \\
& = \tvd(w) - 2(y - x).
\end{align*}
Since $x$ and $y$ belong to different symmetry classes, $2(y - x) = \tvd(\langle(x \ y )\rangle)$, which completes the proof in this case.

On the other hand, if $x$ and $y$ belong to the same symmetry class, choose an index set $I$ that contains $y$ and one element from each other symmetry class.  Because $x$ and $y$ are in the same symmetry class, we have $|w(x) - x| = |w(y) - y|$. Thus
\begin{align*}
\tvd(w) & = |w(y) - y| + \sum_{i \in I \smallsetminus\{y\}}|w(i) - i| \\
& = w(x) - x + \sum_{i \in I \smallsetminus\{y\}}|w(i) - i|. 
\end{align*}
Furthermore,
\begin{align*}
\tvd(w\cdot \langle(x \ y )\rangle) 
& = |w(\langle(x \ y )\rangle(y)) - y| + \sum_{i \in I \smallsetminus\{y\}}|w(\langle(x \ y )\rangle(i)) - i| \\
& = w(x) - y + \sum_{i \in I \smallsetminus\{y\}}|w(i) - i| \\
& = \tvd(w) - (y - x).
\end{align*}
Since $x$ and $y$ belong to the same symmetry class, $y - x = \tvd(\langle(x \ y )\rangle)$, as needed.
\end{proof}

We are now prepared to give an inductive proof of Theorem~\ref{thm:unbranched}.

\begin{proof}[Proof of Theorem~\ref{thm:unbranched}]
Thanks to Corollary~\ref{cor:lower bound}, it remains to prove that $\tvd(w)/2$ is at least $\$(w)$ for all $w$. We will prove this by inducting on the length of $w$. Both the cost and the displacement of the identity are $0$, establishing the base case. Now suppose that $\ell(w) > 0$, and assume that the result holds for all elements of length less than $\ell(w)$.

Because $w$ is an element of an unbranched George group $W$, Lemma~\ref{lem:unbranched can find a good transposable pair} means that there is a transposable pair $\{x,y\}$ for $W$ such that
$$w(x) \ge y > x \ge w(y),$$
and Lemma~\ref{lem:good transposition can be peeled off} implies that
$$\tvd(w \cdot \langle(x \ y )\rangle) = \tvd(w) - \tvd(\langle(x \ y )\rangle).$$
Set $v := w \cdot \langle(x \ y )\rangle$.  Since $\langle(x \ y )\rangle$ is a (right) inversion of $w$, we have $\ell(v) < \ell(w)$. Thus the inductive hypothesis applies to $v$, and $\$(v) = \tvd(v)/2$. Since $\langle(x \ y )\rangle$ is a transposition, $\$(\langle(x \ y )\rangle) = \tvd(\langle(x \ y )\rangle)/2$ by definition, and therefore
\begin{align*}
\tvd(w) &= \tvd(v) + \tvd(\langle(x \ y )\rangle)\\
&= 2\cdot \$(v) + 2 \cdot \$(\langle(x \ y )\rangle).
\end{align*}
For any minimal-cost transposition factorization $v = t_1 \cdots t_k$, we have $w = t_1 \cdots t_k \cdot \langle(x \ y )\rangle$, and so
\begin{align*}
\$(w) &\le \$(t_1) + \cdots + \$(t_k) + \$(\langle(x \ y )\rangle)\\
&= \$(v) + \$(\langle(x \ y )\rangle). 
\end{align*}
Combining these results yields
$$2 \cdot \$(w) \leq \tvd(w),$$
completing the proof.
\end{proof}

\subsection{Second main theorem: finite type}
\label{sec:finite}

In this section, we prove our second main theorem, for George groups of finite type ($S^A_n := S_n$, $S^B_n$, and $S^D_n$).  The statement of the theorem involves a statistic introduced in \cite{BBNW}, which we recall now.

Every signed permutation can be expressed uniquely as a direct sum of indecomposable signed permutations (as in \cite[\S2]{BBNW}). For example,
$$[\ol{3},\ol{1},2,\ol{4},7,6,8,\ol{5}] = [\ol{3},\ol{1},2] \oplus [\ol{1}] \oplus [3,2,4,\ol{1}].$$

\begin{definition}
Let $w$ be a signed permutation, with
$$w = w^1 \oplus \cdots \oplus w^k,$$
where each $w^i$ is an indecomposable signed permutation. These $w^1, \ldots, w^k$ are the \emph{type $B$ blocks} of $w$, and
$$\bl^B(w) := k.$$
Of course, even-signed permutations can also be written as direct sums. If we require that the summands themselves be even-signed permutations, then those summands are the \emph{type $D$ blocks} of $w$, and $\bl^D(w)$ is the number of type $D$ blocks required.
\end{definition}

For $w = [\ol{3},\ol{1},2,\ol{4},7,6,8,\ol{5}] \in S_8^D \subset S_8^B$, the type $B$ blocks and the type $D$ blocks of $w$ are given by the following decompositions, respectively:
\begin{align*}
w &= [\ol{3},\ol{1},2] \oplus [\ol{1}] \oplus [3,2,4,\ol{1}]\\
&= [\ol{3},\ol{1},2] \oplus [\ol{1},4,3,5,\ol{2}].
\end{align*}
From this we see that $\bl^B(w) = 3$ while $\bl^D(w) = 2$.

For a (usual, unsigned) permutation $w \in S_n$, the decomposition of $w$ as a direct sum of indecomposables is the same as the decomposition if we think of $w$ as belonging to $S^B_n$ or $S^D_n$.  We say that these indecomposable summands are the \emph{type $A$ blocks} of $w$, and define $\bl^A(w)$ to be the number of such blocks.

\begin{theorem}
\label{thm:finite type}
For every (signed) permutation $w$ in the finite George group $S^X_n$, we have
\[ 
\$(w) = \frac{\tvd(w)}{2} + \bl^B(w) - \bl^X(w).
\]
\end{theorem}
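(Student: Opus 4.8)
The plan is to reduce to type $D$ and then mirror the structure of the proof of Theorem~\ref{thm:unbranched}, with the target $\tvd(w)/2$ replaced by a corrected potential. First I would observe that the only genuinely new case is $X = D$. If $w \in S_n$ is an unsigned permutation, its decomposition into indecomposables is the same whether $w$ is read in $S_n$ or in $S^B_n$, so $\bl^A(w) = \bl^B(w)$ and the correction term vanishes; likewise $\bl^B(w) - \bl^B(w) = 0$ when $X = B$. Since $S_n$ and $S^B_n$ are unbranched, Theorem~\ref{thm:unbranched} then yields the asserted $\$(w) = \tvd(w)/2$ in these two cases. It therefore remains to treat $w \in S^D_n$, where the reflections are exactly the pair-transpositions $\langle(i~j)\rangle$ with $i,j$ in different symmetry classes, the single sign changes $\langle(i~\ol{i})\rangle$ of $S^B_n$ being unavailable; throughout, $\$$ denotes the cost obtained by minimizing Equation~\eqref{eq:optimization} over factorizations into these type-$D$ reflections.

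Next I would give a combinatorial reading of the correction term. Call $c \in \{1,\ldots,n-1\}$ an \emph{odd cut} of $w$ if $w$ preserves the set $\{\pm 1,\ldots,\pm c\}$ and the restriction of $w$ to $\{1,\ldots,c\}$ has an odd number of negative values. Every type-$B$ block boundary of $w$ is a cut, and it fails to be a type-$D$ block boundary precisely when it is odd; hence $\bl^B(w) - \bl^D(w)$ equals the number $N(w)$ of odd cuts of $w$. Setting $\Phi(w) := \tvd(w)/2 + N(w)$, the theorem asserts that $\$(w) = \Phi(w)$ for $w \in S^D_n$.

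For the lower bound $\$(w) \ge \Phi(w)$, I would show that $\Phi$ decreases by at most the cost of a reflection: for every type-$D$ reflection $\langle(x~y)\rangle$,
\[
\Phi(w) \le \Phi(w\cdot\langle(x~y)\rangle) + \$(\langle(x~y)\rangle).
\]
Since $\Phi(\mathrm{id}) = 0$, iterating this along any factorization $w = t_1 \cdots t_k$ gives $\Phi(w) \le \$(t_1) + \cdots + \$(t_k)$, and minimizing yields $\$(w) \ge \Phi(w)$. Writing $\Delta\tvd$ and $\Delta N$ for the drops in $\tvd$ and $N$ under right multiplication by $t = \langle(x~y)\rangle$, the displayed inequality reads $\Delta\tvd/2 + \Delta N \le \tvd(t)/2$. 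Proposition~\ref{prop:subadditive} already gives $\Delta\tvd \le \tvd(t)$, so the whole content is the claim that whenever $t$ destroys an odd cut, it reduces the displacement strictly less efficiently, by at least $2$ per destroyed cut: precisely, $\tvd(t) - \Delta\tvd \ge 2\Delta N$.

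For the matching upper bound I would argue by induction on reflection length, peeling off type-$D$ reflections that realize equality in the displayed inequality: either an efficient pair-transposition that drops $\tvd$ by its full cost while leaving $N$ unchanged (produced, when $N(w) = 0$, by an analogue of Lemma~\ref{lem:unbranched can find a good transposable pair} in which the good pair $\{x,y\}$ can be chosen not to lie in a single symmetry class), or, when $N(w) > 0$, a pair-transposition straddling an odd cut that removes it with surplus cost exactly $2$. The source of this surplus is the identity $\langle(i~\ol{i})\rangle\langle(i+1~\overline{i+1})\rangle = \langle(i~i+1)\rangle\langle(i~\overline{i+1})\rangle$, whose right-hand side is a legal type-$D$ factorization costing one more than the two forbidden single sign changes on the left; performing one such replacement for each of the $\bl^B(w) - \bl^D(w)$ block boundaries interior to a type-$D$ block accounts for the correction term. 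I expect the main obstacle to be the sharp interplay in the lower-bound step, where one must control simultaneously how a single reflection changes the total displacement and the set of odd cuts; this requires a careful case analysis according to whether $t$ straddles each cut and to the signs of the transposed values, and it is exactly here that the absence of single sign changes in $S^D_n$ is converted into the quantitative surplus $\tvd(t) - \Delta\tvd \ge 2\Delta N$.
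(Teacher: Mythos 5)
Your reduction to the case $X = D$ and your identification $\bl^B(w) - \bl^D(w) = N(w)$ (the number of what you call odd cuts) are both correct, and your overall strategy is genuinely different from the paper's. The paper does not re-run the induction of Theorem~\ref{thm:unbranched} in type $D$; instead it observes that $\$(t) = \depth^D(t) + 2\llbracket t \text{ signed}\rrbracket$ for every type-$D$ reflection $t$, deduces the lower bound $\$(w) \geq \depth^D(w) + \neg(w)$ from the elementary fact that any reflection factorization of $w$ must contain at least $\neg(w)/2$ signed reflections, and then invokes the formula $\depth^D(w) = \tvd(w)/2 - \neg(w) + \bl^B(w) - \bl^D(w)$ of \cite{BBNW} together with their explicit factorization for the matching upper bound. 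That route outsources essentially all of the hard combinatorics to \cite{BBNW}; yours would make the result self-contained, which would be a genuine contribution if completed.

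The gap is that the two statements carrying all of that combinatorial weight are asserted rather than proved. For the lower bound, the inequality $\tvd(t) - \Delta\tvd \geq 2\Delta N$ for every type-$D$ reflection $t$ is, as you say yourself, ``the whole content'' of the argument, and you defer it to an unexecuted case analysis; nothing in the proposal rules out, for instance, a single reflection destroying several odd cuts while shedding displacement at nearly full efficiency, and verifying that this cannot happen requires tracking how $t$ interacts with every cut lying between its two positions. For the upper bound, you need, for every non-identity $w \in S^D_n$, a type-$D$ reflection $t$ with $\Phi(w) = \Phi(w t) + \$(t)$ and $\ell(w t) < \ell(w)$; your claim that when $N(w) = 0$ the good pair of Lemma~\ref{lem:unbranched can find a good transposable pair} ``can be chosen not to lie in a single symmetry class'' is exactly the kind of statement that needs an argument, since for $w = [\ol{1}, \ol{2}]$ (where $N(w) = 1$) \emph{every} pair $\{x,y\}$ with $w(x) \ge y > x \ge w(y)$ has the form $\{i, \ol{i}\}$ --- so the hypothesis $N(w) = 0$ is doing real work that you have not exhibited. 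Likewise the existence of a cut-removing reflection with surplus exactly $2$ that also decreases length is plausible but unverified. As written this is a credible outline of an alternative proof, not a proof.
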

\begin{proof}
For $w$ in $S_n$ or $S^B_n$ (so $X = A$ or $B$), we have by Theorem~\ref{thm:unbranched} that $\$(w) = \tvd(w)/2$, and we have by definition of blocks that the type-$X$ blocks and type-$B$ blocks of $w$ are the same, so the result holds in these cases. 

Let $w$ be a permutation in $S^D_n$, and let $t_1 \cdots t_k$ be a \$-minimizing factorization of $w$ into transpositions.
Recall from Section~\ref{sec:background} that if $i, j > 0$ then $\$((i \ j)(\ol{i}\ \ol{j})) = \depth^D((i \ j)(\ol{i}\ \ol{j}))$ and $\$((i \ \ol{j})(\ol{i}\ j)) = \depth^D((i \ \ol{j})(\ol{i}\ j)) + 2$.  We refer to the transpositions in the second case as \emph{signed}.  
Then, using the Iverson bracket, 
\begin{align*}
\$(w) = \sum_i \$(t_i) 
& = \sum\limits_i \left(\depth^D(t_i) + 2\llbracket t_i \text{ signed}\rrbracket\right)\\
&= \left(\sum\limits_i \depth^D(t_i)\right) + 2\#\{t_i : t_i \text{ is signed}\} \\
&\ge \depth^D(w) + \neg(w).
\end{align*}
Next, we use \cite[Corollary~2.10]{BBNW}, which computes $\depth^D(w)$ in terms of the statistics we have defined, as well as $\neg(w) := \{i \in [n] : w(i) < 0\}$:
\begin{align*}
\$(w) & \geq \left(\frac{\tvd(w)}{2} - \neg(w) + \bl^B(w) - \bl^D(w)\right) + \neg(w)\\
&= \frac{\tvd(w)}{2} + \bl^B(w) - \bl^D(w).
\end{align*}
On the other hand, \cite[\S4.2]{BBNW} produces a factorization $t'_1 \cdots t'_{k'}$ of $w$ for which
\begin{align*}
\sum\limits_i \$(t'_i) &= \frac{\tvd(w)}{2} - \neg(w) + \bl^B(w) - \bl^D(w) + \neg(w)\\
&= \frac{\tvd(w)}{2} + \bl^B(w) - \bl^D(w).
\end{align*}
And since $\$(w) \le \sum \$(t'_i)$, we can conclude from these two inequalities that indeed
$$ \$(w) = \frac{\tvd(w)}{2} + \bl^B(w) - \bl^D(w),$$
as claimed.
\end{proof}

\section{Further remarks and open questions}
\label{sec:further remarks}

We conclude our work with commentary about our methods and a description of several possible directions for further research. Some of these possibilities involve specific conjectures, while others are more general questions or hopes for a deeper understanding.

\subsection{Different combinatorial realizations in affine types}
\label{sec:BB vs EE}

As originally observed in \cite{EE}, in the definition of $\wt{S}^C_n$, there is a choice about whether to have a mirror symmetry across the integer $n + 1$ (corresponding to the translation by $2n + 2$ in the definition) or to place the mirror between the integers $n$ and $n + 1$ (in which case the corresponding translation would be by $2n + 1$ instead).\footnote{In fact in principle one could place both mirrors between consecutive pairs of integers, so that there are no fixed points in the action of the group on $\ZZ$.  However, this clashes with the extremely natural convention to have $S^B_n$ act on $\pm[n]$ (with $0$ fixed) rather than a string of $2n$ consecutive integers like $\{-n + 1, \ldots, -1, 0, 1, \ldots, n\}$.}  Indeed, in the standard reference \cite{BB} (and in the \texttt{AffinePermutationGroup} implementation on Sage \cite{sage}), the latter convention is chosen.  This difference has no effect on the algebra of the group, but it changes the correspondence between the algebraic and combinatorial objects, and hence it changes fundamentally the answers to the questions we consider.  For example, the window notation of the $\wt{S}^C_n$-simple reflection that we denoted $s'_n$ in Section~\ref{sec:statistics} is $[1, \ldots, n - 1, n + 2]$, with $\tvd(s'_n) = 2$, but in the alternate convention it would be $[1, \ldots, n - 1, n + 1]$, with total displacement equal to $1$.  Our decision to follow the realization from \cite{EE} rather than the variation used in \cite{BB} is motivated by the fact that $\tvd(w)$ is even for every permutation, signed permutation, and affine permutation---and only in this realization is $\tvd(w)$ an even integer for every affine signed permutation $w$.

\subsection{Conjectures in affine types $\bm{B}$ and $\bm{D}$}
\label{sec:affine B and D}

There are two George groups of window size $n$ that are neither unbranched nor finite: $\affS^B_n$ and $\affS^D_n$.  They are defined relative to $\affS^C_n$ by the same sort of evenness conditions that define $S^D_n$ as a subgroup of $S^B_n$.  In particular, $\affS^B_n$ is the group of affine signed permutations $w$ such that $\# \{ i > 0 : w(i) < 0\}$ is even, and $\affS^D_n$ is the subgroup of $\affS^B_n$ consisting of those affine signed permutations $w$ such that, in addition, $\# \{i > n + 1 : w(i) < n + 1\}$ is even.

Below, we state a precise conjecture for the value $\$(w)$ when $w \in \affS^B_n$, and raise the question of computing $\$(w)$ when $w \in \affS^D_n$.  We begin by defining an analogue of blocks for affine signed permutations.

Given an affine signed permutation $w$, say that an integer $j \in [n - 1]$ is \emph{good} if $w$ restricts to a bijection on $\pm[j]$ to itself.  The number of good integers is at most $n - 1$ (achieved, for example, on the identity, but also on the permutation $[-1, -2, -3, -4] \in \wt{S}^C_4$) and can be as small as $0$ (for example, in the permutation $[4, 5]\in \wt{S}^C_2$).  Define
\[
\bl^{\wt{C}}(w) = 1 + \# \{\text{good values for } w\}.
\]
Suppose that $j$ is a good value for $w$.  Say that $j$ is (further) \emph{very good} if the restriction of $w$ to $\pm[j]$ is an even-signed permutation.  Define
\[
\bl^{\wt{B}}(w) = 1 + \# \{\text{very good values for } w\}.
\]
\begin{example}
    Consider the affine signed permutation 
    \[
    w = [1, \ol{2}, 4, 3, 6, \ol{5}, 7, \ol{8}, 10 + 24, 9, 11] \in \wt{S}^B_{11}.
    \]
    Then the good values of $w$ are $1, 2, 4, 6, 7, 8$, so that $\bl^{\wt{C}}(w) = 7$, and the very good values of $w$ are $1, 6, 7$, so that $\bl^{\wt{B}}(w) = 4$.
\end{example}
The values $\bl^{\wt{C}}(w)$ and $\bl^{\wt{B}}(w)$ are meant to be the analogue of the numbers of blocks of $w$.  The $+1$ accounts for the ``last'' block stretching out to include the value $n$---unlike the others, that block can include values in the window outside of $\pm[n]$.

The following conjecture extends Theorem~\ref{thm:finite type} to affine type $\wt{B}$.
\begin{conjecture}
\label{conj:affine B}
    If $w \in \wt{S}^B_n$, then
    \[
    \$(w) = \frac{1}{2} \tvd(w) + \bl^{\wt{C}}(w) - \bl^{\wt{B}}(w).
    \]
\end{conjecture}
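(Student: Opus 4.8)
The plan is to prove matching upper and lower bounds, as in the proof of Theorem~\ref{thm:finite type}, organized around an affine analogue of the direct-sum decomposition. The good values $j_1 < \cdots < j_r$ of $w$ cut $\pm[n]$ into the nested symmetric intervals $\pm[j_1] \subset \cdots \subset \pm[j_r]$; on each difference $\pm[j_{i+1}] \smallsetminus \pm[j_i]$ the map $w$ restricts to an ordinary signed permutation, while the remaining ``top'' piece (from $j_r$ up through $n$ and continuing by the affine symmetry across $n+1$) is a reduced affine signed permutation with no good values of its own. Directly from the definitions, $\tvd$, $\bl^{\wt C}$, and $\bl^{\wt B}$ are each additive across this decomposition, with both leading $+1$'s attaching to the top piece. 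The subtlety is that $\$$ does \emph{not} split naively, because a finite interval on which $w$ is \emph{odd}-signed cannot be sorted using only the reflections of $\wt{S}^B_n$ that act within it---those are exactly the type-$D$ reflections $\langle(i~j)\rangle$ and $\langle(i~\ol j)\rangle$, which preserve the parity of the number of negatives---so such intervals must be coupled across a boundary. This coupling is precisely what $\bl^{\wt C}(w) - \bl^{\wt B}(w) = \#\{\text{good but not very good } j\}$ counts.

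For the reduction I would, following the proof of Theorem~\ref{thm:finite type}, rewrite the cost of any factorization into $\wt{S}^B_n$-reflections in terms of the number of sign-changing reflections it uses, and track the invariant ``parity of negatives inside $\pm[j]$'' across each good value $j$. On the very-good intervals this invariant is already even, and Theorem~\ref{thm:finite type} computes the contribution of such an interval, as an even-signed permutation, as its type-$D$ cost $\tfrac12\tvd + \bl^B - \bl^D$; on the good-but-not-very-good boundaries the invariant is odd, forcing an additional sign-change (or mirror) reflection to cross the boundary and contributing the extra $+1$ tallied by $\bl^{\wt C} - \bl^{\wt B}$.

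The decisive piece is the reduced affine top block. Here the reflections available in $\wt{S}^B_n$ still exclude the sign changes $\langle(i~\ol i)\rangle$ but do include the mirror reflections $\langle(i~2n+2-i)\rangle$ across $n+1$ (of cost $n+1-i$, since $i$ and $2n+2-i$ lie in one symmetry class); these play the role that $s_0$ played in finite type $B$, allowing parity to be adjusted at the top of the window. For the lower bound I would combine the subadditivity of $\tvd$ (Corollary~\ref{cor:lower bound}) with the parity count above to show that every $\wt{S}^B_n$-factorization of the top block costs at least $\tfrac12\tvd$ plus the number of odd boundaries it must cross. For the matching upper bound I would build an explicit factorization in the style of \cite[\S4.2]{BBNW}, peeling off reflections as in Lemma~\ref{lem:good transposition can be peeled off} and using the mirror reflections to realize the required parity changes at exactly the predicted cost.

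I expect the main obstacle to be the lower bound for the affine top block. In finite type this inequality came for free from the depth formula \cite[Corollary~2.10]{BBNW}; no such formula is available in affine type $\wt B$, so the parity-obstruction bound must be established from scratch, and it is exactly the interaction between the affine wrap-around across $n+1$ and the unavailable sign changes across $0$ that makes the bookkeeping delicate. A secondary obstacle is justifying rigorously that an optimal factorization can be taken to respect the good-value boundaries except at the odd ones---an exchange argument that I would base on Lemma~\ref{lem:good transposition can be peeled off} and the explicit description of transposable pairs in $\wt{S}^C_n$.
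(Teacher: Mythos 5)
First, a point of order: the statement you are proving is stated in the paper as Conjecture~\ref{conj:affine B}, and the paper offers no proof of it at all. There is therefore no ``paper's own proof'' to compare against; the closest benchmark is the paper's proof of Theorem~\ref{thm:finite type}, whose two-sided structure (a lower bound from a depth formula plus an explicit matching factorization) you are consciously imitating. Your setup is sound as far as it goes: the identification of the available reflections in $\wt{S}^B_n$ (no $\langle(i~\ol{i})\rangle$, but the mirror reflections $\langle(i~2n+2-i)\rangle$ of cost $n+1-i$ are present) agrees with the paper's description, the good/very-good values do cut the window into nested symmetric pieces on which $\tvd$, $\bl^{\wt{C}}$, and $\bl^{\wt{B}}$ are additive, and the heuristic that each good-but-not-very-good boundary forces a parity repair accounting for $\bl^{\wt{C}}(w)-\bl^{\wt{B}}(w)$ is the right intuition for why the conjectured formula has that shape.

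However, what you have written is a strategy, not a proof, and the two steps you yourself flag as obstacles are precisely the ones that carry all the mathematical content. The lower bound in the proof of Theorem~\ref{thm:finite type} is not ``a parity count'': it rests entirely on the exact formula $\depth^D(w) = \tfrac12\tvd(w) - \neg(w) + \bl^B(w) - \bl^D(w)$ from \cite[Corollary~2.10]{BBNW}, together with the identity $\$(t) = \depth^D(t) + 2\llbracket t \text{ signed}\rrbracket$ on reflections. No analogue of either ingredient is available in $\wt{S}^B_n$; indeed the paper explicitly doubts (Section~\ref{sec:further remarks}) that depth has a clean formula in affine types, so the route ``cost $\geq$ depth $+$ correction'' cannot simply be transplanted. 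Your proposed substitute---subadditivity of $\tvd$ plus a parity obstruction---gives $\$(w) \geq \tfrac12\tvd(w)$ (that is Corollary~\ref{cor:lower bound}) but does not obviously give the extra $+1$ per odd boundary: a single affine reflection of large displacement can cross several boundaries and change several of the tracked parities at once while still satisfying $\$(t) = \tfrac12\tvd(t)$, so the naive ``each odd boundary costs one extra unit'' tally needs a genuine argument that no such reflection can amortize the repairs. Likewise, the claim that an optimal factorization can be taken to respect the good-value boundaries is an exchange argument you have not supplied, and Lemma~\ref{lem:good transposition can be peeled off} (which only computes how $\tvd$ changes under one right multiplication) is not by itself strong enough to justify it. Until the lower bound for the affine top block and the boundary-respecting reduction are actually proved, the statement remains, as the paper says, a conjecture.
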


If Conjecture~\ref{conj:affine B} holds, then it would follow that 
\[
\$(w) - \frac{\tvd(w)}{2} \leq n
\]
for all $w \in \wt{S}^B_n$.  By Theorems~\ref{thm:unbranched} and~\ref{thm:finite type}, the same inequality holds for $w$ in a finite or unbranched George group.  However, in $\wt{S}^D_n$, empirical evidence suggests that it is no longer true that the difference
$
\$(w) - \frac{\tvd(w)}{2}
$
is bounded as $w$ varies in the group.
\begin{conjecture}
    For all $w \in \wt{S}^D_n$, we have
    \[
    \frac{\tvd(w)}{2} \leq \$(w) \leq \tvd(w).
    \]
    Furthermore, the equality $\$(w) = \tvd(w)$ holds if and only if $w$ is of the form 
    \[
    w = [1, \ldots, i - 1, i + 2k \cdot (2n + 2), i + 1, \ldots, n]
    \]
    for some $i \in [n]$ and $k \in \ZZ$.
\end{conjecture}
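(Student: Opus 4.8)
The plan is to take the lower bound $\tvd(w)/2 \le \$(w)$ for free from Corollary~\ref{cor:lower bound} and to concentrate on the upper bound and the equality analysis. The organizing observation is that $\wt{S}^D_n$ contains none of the ``mirror'' reflections of $\wt{S}^C_n$: the across-$0$ transpositions $\langle(i~\ol{i})\rangle$ and the across-$(n+1)$ transpositions $\langle(i~2n+2-i)\rangle$ each flip a single symmetry class through a mirror and so violate one of the two $\wt{D}$-evenness conditions, whereas every different-class transposition preserves both conditions and therefore does lie in $\wt{S}^D_n$. Consequently every reflection $t$ of $\wt{S}^D_n$ is different-class, so $\$(t) = \tfrac12\tvd(t)$, and the only possible source of excess cost over $\tvd(w)/2$ is the unavailability of the cheap same-class reflections.

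For the upper bound I would compare $\wt{S}^D_n$ with $\wt{S}^C_n$, writing $\$_{\wt{C}}$ for the cost function of $\wt{S}^C_n$. By Theorem~\ref{thm:unbranched} we have $\$_{\wt{C}}(w) = \tvd(w)/2$, realized by a factorization into $\wt{S}^C_n$-reflections obtained from iterated good pairs (Lemmas~\ref{lem:unbranched can find a good transposable pair} and~\ref{lem:good transposition can be peeled off}). Since $w \in \wt{S}^D_n$, the same-class reflections appearing in this factorization must be even in number, so they can be grouped in pairs; I would then replace each such pair by a product of different-class reflections whose total cost is at most twice the combined $\wt{S}^C_n$-cost of the pair, leaving the different-class reflections unchanged. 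Because $\$(t) = \tfrac12\tvd(t)$ for every $\wt{S}^D_n$-reflection, this yields a $\wt{S}^D_n$-factorization of cost at most $2\,\$_{\wt{C}}(w) = \tvd(w)$, giving $\$(w) \le \tvd(w)$. A naive greedy induction will not work here: for the extremal elements below, every single different-class reflection that decreases $\tvd$ does so by strictly less than its own cost, so one cannot descend one reflection at a time.

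For the equality statement I would argue both directions through good pairs. If $w$ is \emph{not} of the stated form, I would show that $w$ admits at least one different-class good pair $\{x,y\}$; peeling it off reduces $\tvd$ by $\tvd(\langle(x~y)\rangle) = 2\,\$(\langle(x~y)\rangle)$ by Lemma~\ref{lem:good transposition can be peeled off}, so combining this one efficient step with the upper bound applied to $w\cdot\langle(x~y)\rangle$ gives $\$(w) \le \$(\langle(x~y)\rangle) + \tvd(w\cdot\langle(x~y)\rangle) = \tvd(w) - \$(\langle(x~y)\rangle) < \tvd(w)$. Conversely, for $w = [1,\dots,i-1,\,i+2k(2n+2),\,i+1,\dots,n]$ the only non-fixed symmetry class is the class of $i$; its exceedances are exactly the positions $\equiv i$ and its anti-exceedances exactly the positions $\equiv -i \pmod{2n+2}$, which lie in the \emph{same} symmetry class. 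Hence every good pair of $w$ is same-class and unavailable in $\wt{S}^D_n$, and I would match the bounds by (i) exhibiting an explicit ``detour'' factorization into different-class reflections of total cost exactly $\tvd(w) = 2|k|(2n+2)$, routing the moving class past each of the fixed points $c(n+1)$, and (ii) proving the reverse inequality $\$(w) \ge \tvd(w)$.

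The hard part will be this last lower bound, which must improve on the generic estimate $\$(w)\ge\tvd(w)/2$ by a full factor of two precisely for the pure translations. I expect to obtain it from a potential function that is $1$-Lipschitz with respect to $\$$ on $\wt{S}^D_n$ but grows at the doubled rate on translations: concretely, a weighted count of how the strands of $w$ cross the mirror lines $\{c(n+1) : c \in \ZZ\}$, arranged so that a different-class reflection $\langle(x~y)\rangle$ changes it by at most $|x-y| = \$(\langle(x~y)\rangle)$, while translating the class of $i$ by $2k$ periods forces $2|k|(2n+2)$ such crossings. Equivalently, one may compute $\$$ directly on the translation lattice of the affine Weyl group, where it restricts to a piecewise-linear norm and the special elements are exactly the lattice points on which this norm doubles the displacement. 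Verifying the two subsidiary combinatorial claims—that every non-special element possesses a different-class good pair, and that the detour factorization is optimal—should then complete the argument, with the analogue of Theorem~\ref{thm:finite type} for finite type $D$ serving as a useful model for the bookkeeping.
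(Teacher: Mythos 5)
This statement is Conjecture~4.2 of the paper: the authors offer no proof of it, so there is nothing to compare your attempt against, and the only question is whether your proposal actually closes the conjecture. It does not; it is a strategy outline with the decisive steps left open. Your correct observations are the free lower bound from Corollary~\ref{cor:lower bound}, the fact that the reflections available in $\wt{S}^D_n$ are exactly the different-class transpositions (each same-class transposition $\langle(i~\ol{i})\rangle$ or $\langle(i~2n+2-i)\rangle$ flips the parity of one of the two evenness counts), and the diagnosis that greedy peeling fails on the translation elements because all of their good pairs are same-class.

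The gaps are these. First, for the upper bound, the step ``group the same-class reflections of a $\$_{\wt{C}}$-optimal factorization in pairs and replace each pair by different-class reflections of at most twice the cost'' is unjustified: the two members of a pair need not be adjacent in the word, and moving one past intervening factors replaces it by a conjugate reflection whose cost can be arbitrarily different, so the local identity $\langle(i~\ol{i})\rangle\langle(j~\ol{j})\rangle=\langle(i~\ol{j})\rangle\langle(i~j)\rangle$ does not by itself control the global cost. (You also need to pair the across-$0$ and across-$(n+1)$ reflections separately, since each type carries its own parity obstruction.) Second, the claim that every non-special $w$ admits a \emph{different-class} good pair is asserted, not proved; Lemma~\ref{lem:unbranched can find a good transposable pair} only produces a good pair in $\wt{S}^C_n$, which may be same-class, and ruling that out for all non-special elements is a genuine combinatorial argument you have not supplied. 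Third, and most seriously, the lower bound $\$(w)\ge\tvd(w)$ for the translation elements --- the part that must beat the generic bound by a factor of two --- is addressed only by the hope of a ``$1$-Lipschitz potential function counting mirror crossings,'' with no construction, no verification that every different-class reflection changes it by at most its cost, and no matching upper-bound factorization of cost exactly $2|k|(2n+2)$. Until those three items are carried out, the conjecture remains open.
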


More generally, we can ask the following.

\begin{question}
    Is there a formula for $\$(w)$, for $w \in \wt{S}^D_n$?
\end{question}

\subsection{An aesthetically pleasing construction}

As mentioned in Section~\ref{sec:unbranched}, the proof of our first main theorem produces a factorization of an element $w$ by successively adding factors on the right side.  This differs from the elegant approach in \cite{PT}, in which one considers some factors to have been added on the left and others to have been added on the right, thereby avoiding the need for technical lemmas akin to Lemmas~\ref{lem:unbranched can find a good transposable pair} and~\ref{lem:good transposition can be peeled off}. Is there a similarly elegant algorithm for producing / interpreting $\$$-minimizing factorizations in other types?

\subsection{Ordering the statistics}

In \cite{PT}, it is observed that various inequalities hold among the natural statistics considered, namely, that for every element $w$ of a Coxeter group,
\begin{equation}
\label{eq:chain of inequalities}
\reflen(w) \leq \frac{\reflen(w) + \ell(w)}{2} \leq \depth(w) \leq \ell(w)   
\end{equation}
where $\ell(w) = \ell_S(w)$ is the Coxeter length of $w$ (the smallest number of simple reflections whose product equals $w$) and $\reflen(w)$ is the reflection (absolute) length of $w$ (the smallest number of arbitrary reflections whose product equals $w$).  In \cite{BBNW, PT}, the elements in which various equalities in \eqref{eq:chain of inequalities} hold are classified and enumerated in finite types $A$, $B$, and $D$.

If $W$ is an \emph{unbranched} George group, so that $\tvd(s) = 1$ for every simple transposition $s$, it follows immediately from our work that \eqref{eq:chain of inequalities} can be extended as follows:
\[
\reflen(w) \leq \frac{\reflen(w) + \ell(w)}{2} \leq \depth(w) \leq \frac{1}{2}\tvd(w) = \$(w) \leq \ell(w).
\]
Can one characterize and enumerate the elements for which $\depth(w) = \$(w)$?  And, similarly, for which $\$(w) = \ell(w)$?

For the branched types, it is no longer true that either $\tvd(w)/2$ or $\$(w)$ is bounded by $\ell(w)$: indeed, this inequality is violated by the simple transposition $s = \langle(1~\ol{2})\rangle$ in these types.  Can anything interesting be said uniformly?

\subsection{Depth in affine types}

Recalling the algebraic perspective discussed in the introduction, we note that our work leaves open the question of computing the depth of affine (signed) permutations.  We mention briefly the reason we believe the answer may not be as attractive as the formulas discussed above.  In the affine symmetric group $\wt{S}_n$, the depth of a transposition $\langle (i~j)\rangle$ is given by
\[
\depth(\langle (i~j)\rangle) = \frac{1 + \ell(\langle (i~j)\rangle)}{2} = |i - j| - \left\lfloor \frac{|i - j|}{n}\right\rfloor,
\]
and similar floor terms appear in formulas in other affine types (see, e.g., \cite[(8.44)]{BB}).  Thus, even in the simplest case of the depth of a single transposition, the formula is somewhat unattractive; and it seems reasonable to expect the level of complication to grow for elements that require longer factorizations.

\section*{Acknowledgements}
The authors are grateful for the careful reading and helpful advice of the anonymous referees.

\end{document}